\documentclass{article}

\usepackage {amsmath}
\usepackage {hyperref}
\usepackage {amsfonts}
\usepackage {amsthm}
\usepackage[flushmargin]{footmisc}
\usepackage[parfill]{parskip}
\usepackage {amssymb}
\usepackage {fullpage}
\usepackage {mathrsfs}
\usepackage {amscd}
\usepackage{bbm}
\usepackage[all,cmtip]{xy}
\DeclareMathAlphabet{\mathpzc}{OT1}{pzc}{m}{it}

\title{Modular categories, crossed S-matrices and Shintani descent}
\author{Tanmay Deshpande}
\date{}

\newtheorem {thm} {Theorem} [section]
\newtheorem {prop} [thm] {Proposition}
\newtheorem {conj} [thm] {Conjecture}
\newtheorem {lem} [thm] {Lemma}
\newtheorem {cor} [thm] {Corollary}

\theoremstyle{definition}
\newtheorem {defn} [thm] {Definition}

\newtheorem {prob} [thm]  {Problem}

\newtheorem {rk} [thm]  {Remark}
\newtheorem {ex} [thm] {Example}

\newcommand{\beq}{\begin{equation}}
\newcommand{\eeq}{\end{equation}}
\newcommand{\bthm}{\begin {thm}}
\newcommand{\ethm}{\end {thm}}
\newcommand{\bprop}{\begin {prop}}
\newcommand{\eprop}{\end {prop}}
\newcommand{\bprob}{\begin {prob}}
\newcommand{\eprob}{\end {prob}}
\newcommand{\bcor}{\begin {cor}}
\newcommand{\ecor}{\end {cor}}
\newcommand{\blem}{\begin{lem}}
\newcommand{\elem}{\end{lem}}
\newcommand{\bdefn}{\begin{defn}}
\newcommand{\edefn}{\end{defn}}
\newcommand{\brk}{\begin{rk}}
\newcommand{\erk}{\end{rk}}

\newcommand{\ZbN}{\Z/2N\Z}
\newcommand{\ZN}{\Z/N\Z}

\newcommand{\xto}{\xrightarrow}
\newcommand{\onto}{\twoheadrightarrow}

\newcommand{\KabD}{K_{\Qab}(\D,F)}
\newcommand{\KabC}{K_{\Qab}(\C,F)}

\newcommand{\bpf}{\begin{proof}}
\newcommand{\epf}{\end{proof}}
\newcommand{\bex}{\begin{ex}}
\newcommand{\eex}{\end{ex}}
\newcommand{\rar}[1]{\stackrel{#1}{\longrightarrow}}
\newcommand{\f}{\mathbb}

\newcommand{\<}{\langle}
\renewcommand{\>}{\rangle}

\newcommand{\tensor}{\otimes}
\newcommand{\h}{\operatorname}
\newcommand{\e}{\operatorname{ev}}
\renewcommand{\c}{\operatorname{coev}}

\newcommand{\Sh}{\operatorname{Sh}}

\newcommand{\Fq} {\mathbb{F}_q}

\newcommand{\Fqcl} {\overline{\mathbb{F}}_q}
\newcommand{\Q} {\mathbb{Q}}

\newcommand{\F} {\mathbb{F}}
\newcommand{\un} {\mathbbm{1}}

\renewcommand{\phi} {\varphi}

\newcommand{\M} {\mathscr{M}}

\newcommand{\D} {\mathscr{D}}
\newcommand{\Deq} {\mathscr{D}^{\Z/2N\Z}}
\newcommand{\SD} {S\left({\D}^{\Z/2N\Z}\right)}
\newcommand{\C} {\mathscr{C}}

\newcommand{\Hom} {\h{Hom}}
\newcommand{\uuPic} {\underline{\underline{\h{Pic}}}}
\newcommand{\uPic} {\underline{\h{Pic}}}
\newcommand{\Pic} {{\h{Pic}}}
\newcommand{\uEqBr} {\underline{\h{EqBr}}}
\newcommand{\uEqMod} {\underline{\h{EqMod}}}
\newcommand{\EqMod} {{\h{EqMod}}}

\renewcommand{\Vec}{\h{Vec}}

\newcommand{\Qab} {\mathbb{Q}^{\h{ab}}}

\newcommand{\G}{{\Gamma}}

\newcommand{\noin}{\noindent}

\newcommand{\g} {{\gamma}}
\renewcommand{\l} {{\lambda}}

\newcommand{\ga} {{\gamma_1}}
\newcommand{\gb} {{\gamma_2}}

\newcommand{\id}{\operatorname{id}}

\newcommand{\bit}{\begin{itemize}}
\newcommand{\eit}{\end{itemize}}

\newcommand{\DG}{\D_G(G)}

\renewcommand{\O}{\mathcal{O}}

\newcommand{\Z}{\mathbb{Z}}
\newcommand{\bZ}{\mathbb{Z}}

\newcommand{\tr}{\h{tr}}
\newcommand{\Fun}{\h{Fun}}

\newcommand{\bconj}{\begin{conj}}
\newcommand{\econj}{\end{conj}}

\begin{document}
\maketitle

\begin{abstract}
\noin Let $\mathscr{C}$ be a modular tensor category over an algebraically closed field $k$ of characteristic 0. Then there is the ubiquitous notion of the S-matrix $S(\C)$ associated with the modular category. The matrix $S(\C)$ is a symmetric matrix, its entries are cyclotomic integers and the matrix $(\dim \C)^{-\frac{1}{2}}\cdot S(\C)$ is a unitary matrix. Here $\dim \mathscr{C}\in k$ denotes the categorical dimension of $\C$ and it is a totally positive cyclotomic integer. Now suppose that we also have a modular autoequivalence $F:\mathscr{C}\to \mathscr{C}$. In this paper, we will define and study the notion of a crossed S-matrix associated with the modular autoequivalence $F$. We will see that the crossed S-matrix occurs as a submatrix of the usual S-matrix of some ``bigger'' modular category and hence the entries of a crossed S-matrix are also cyclotomic integers. We will prove that the crossed S-matrix (normalized by the factor $(\dim \C)^{-\frac{1}{2}}$) associated with any modular autoequivalence is a unitary matrix. We will also prove that the crossed S-matrix is essentially the ``character table'' of a certain semisimple commutative Frobenius $k$-algebra associated with the modular autoequivalence $F$. The motivation for most of our results comes from the theory of character sheaves on algebraic groups, where we expect that the transition matrices between irreducible characters  and character sheaves can be obtained as certain crossed S-matrices. In the character theory of algebraic groups defined over finite fields, there is the notion of Shintani descent of Frobenius stable characters. We will define and study a categorical analogue of this notion of Shintani descent in the setting of modular categories.
\end{abstract}


\section{Introduction}
Let $k$ be an algebraically closed field of characteristic 0. Let $\C$ be a $k$-linear modular category. Let $F:\mathscr{C}\to \mathscr{C}$ be a modular autoequivalence.  We will associate with this modular autoequivalence a square\footnote{The rows and columns of this matrix are indexed by two rather different sets. We will see later that these two sets have the same cardinality, although this is not at all clear a priori.} matrix $S(\C,F)$ (well defined up to scaling rows by roots of unity) with entries in $k$ known as the (unnormalized) crossed S-matrix. Let $\O_\C$ denote the set of (isomorphism classes of) simple objects in $\C$. Then $F$ induces a permutation (also denoted by $F$) of the set $\O_\C$. The size of the square matrix $S({\C,F})$ is equal to $|\O_\C^F|$, the number of $F$-stable simple objects in $\C$. We remark that the matrix $S(\C,\id_\C)$ is the usual S-matrix (cf.  \cite{BK}, \cite{ENO1}) of the modular category $\C$. We will prove that the matrix $S(\C,F)$ occurs as a submatrix in the S-matrix of a certain ``bigger'' modular category. Using this we will deduce that the entries of the crossed S-matrix must in fact be cyclotomic integers. In particular we can unambiguously talk about the conjugate transpose $\overline{S(\C,F)}^T$ of the matrix $S(\C,F)$. We will prove that
\beq
S(\C,F)\cdot \overline{S(\C,F)}^T=\dim \C\cdot I=\overline{S(\C,F)}^T\cdot S(\C,F),
\eeq
i.e. the normalized crossed S-matrix $(\dim \C)^{-\frac{1}{2}}\cdot S(\C,F)$ is unitary.

Associated with the modular autoequivalence $F$, we will also define a commutative Frobenius $k$-algebra $K(\C,F)$ using a  Grothendieck ring construction. This algebra has a basis parametrized by the set $\O_\C^F$. We will prove that the algebra $K(\C,F)$ is semisimple and that the ``character table'' of this algebra is given by the crossed S-matrix $S(\C,F)$. Using this we will prove an analogue of the Verlinde formula for the algebra $K(\C,F)$. 

One of the motivations for studying these structures comes from the theory of character sheaves on algebraic groups, where we expect that the transition matrices between irreducible characters  and character sheaves can be obtained as certain crossed S-matrices. Throughout this paper we will provide references to analogous results and definitions from the theory of character sheaves which serve as a motivation for the results and definitions of the present paper.

We now briefly describe the organization of this paper. In \S\ref{s:mftcs} we describe the motivation for our results from the theory of character sheaves and some history. In \S\ref{s:pamc} we briefly recall the notion of a modular category and related ideas. In \S\ref{s:maimc} we briefly describe some results from \cite{ENO2} which relate the notion of modular autoequivalences of $\C$ to the notion of invertible  $\C$-module categories with traces. In \S\ref{s:dmr} we describe our main definitions and results. In \S\ref{s:crsmat} we define the crossed S-matrix $S(\C,F)$ using the invertible $\C$-module category $\M$ that we can associated with $F$ in view of \S\ref{s:maimc}. We also show that we can lift the modular autoequivalence $F$ to define a modular action of a cyclic group $\Z/N\Z$ on $\C.$ In \S\ref{s:faawma}, \ref{s:sfa} we introduce the commutative Frobenius $k$-algebra $K(\C,F)$ associated with the modular autoequivalence $F$, study some of its preliminary properties and prove that it is semisimple. In \S\ref{s:csbcc} we will construct an auxiliary spherical braided $\Z/2N\Z$-crossed category $\D$ with trivial component $\C$. The equivariantization $\D^{\ZbN}$ of $\D$ turns out to be a modular category which will be used in some of our definitions and proofs. In \S\ref{s:ucsm} we state our main results about the matrix $S(\C,F)$ and the commutative algebra $K(\C,F)$ and its characters. In \S\ref{s:pmrg} we will prove the main results. In fact we will state and prove more general versions of our main results. In \S\ref{s:fak}, \ref{s:cak} we introduce a certain Frobenius $k$-algebra $K(\D,F)$ such that $K(\C,F)\subset K(\D,F)$ is a subalgebra and study the characters of the algebra $K(\D,F)$. In \S\ref{s:msmf}, \ref{s:umsmf} we introduce some more general crossed S-matrices and prove that in fact all of them are unitary (up to normalization). In \S\ref{s:cp} we complete the proofs of our main results. In \S\ref{s:sdmc} we introduce the notion of Shintani descent in our setting of modular categories equipped with an autoequivalence. This is motivated by the notion of Shintani descent from the character theory of algebraic groups defined over finite fields (cf. \cite{S},\cite{De2}). To define Shintani descent, in \S\ref{s:tsbcc} we introduce the notion of ribbon twists in braided crossed categories. In \S\ref{s:sm}, \ref{s:rwmsmf} for each positive integer $m$ we define the $m$-th Shintani matrix and  study its relationship with the generalized crossed S-matrices introduced in \S\ref{s:pmrg}. In \S\ref{s:sd}, \ref{s:tosd} we define the $m$-th Shintani descent map and the $m$-th Shintani basis of $K(\C,F)$ for each positive integer $m$. Finally, we prove that there exists a positive integer $m_0$ such that the $m$-th Shintani basis of $K(\C,F)$ essentially only depends on the residue of $m$ modulo $m_0$ and also that the 1st Shintani descent map is related to the Asai twisting operator (cf. \cite{S}, \cite{De2}), just as in the character theory of algebraic groups over finite fields.

\section*{Acknowledgments}
I am grateful to V. Drinfeld, M. Boyarchenko, and V. Ostrik for useful correspondence. This work was supported by World Premier Institute Research Center Initiative (WPI), MEXT, Japan.

\subsection{Motivation from the theory of character sheaves}\label{s:mftcs}
In order to study the relationship between character sheaves and characters of finite reductive groups, Lusztig introduced a ``nonabelian Fourier transform'' matrix for any finite group $\Gamma$ (see \cite{L1}). This is a square matrix whose rows as well as columns are parametrized by the isomorphism classes of simple $\Gamma$-equivariant (under conjugation) vector bundles on $\Gamma$. On the other hand, the category $\Sh_\G(\G)$ of $\G$-equivariant vector bundles on $\G$ has the structure of a modular category under convolution of vector bundles. The nonabelian Fourier transform matrix can then be interpreted as the S-matrix of this modular category.

Now suppose that we have an automorphism $F:\G\rar{}\G$ of finite groups. Lusztig also defined a nonabelian Fourier transform matrix associated with such a group automorphism. Note that any such automorphism induces an autoequivalence $F:={F^{-1}}^*:\Sh_\G(\G)\rar{}\Sh_\G(\G)$ of modular categories. We will see that we can interpret the nonabelian Fourier transform matrix in this case as the crossed S-matrix associated with the modular autoequivalence $F$.

Lusztig associated a finite group $\Gamma$ with every two-sided cell in a Weyl group. Thus he associated a nonabelian Fourier transform matrix with each two-sided cell in any Weyl group. Later in \cite{L3}, \cite{GM} the Fourier matrices were generalized to the case of any finite Coxeter system equipped with an automorphism (also sometimes called twist). Thus given any stable two-sided cell in any finite Coxeter system with automorphism, we have an associated Fourier transform matrix. Lusztig used the Fourier transform matrices associated with Weyl groups to transition between unipotent irreducible characters and unipotent almost characters of the corresponding finite reductive groups.  

We expect that all the Fourier matrices associated with stable two-sided cells in finite twisted Coxeter groups can be realized as certain crossed S-matrices. To be more precise, we expect that given a stable two-sided cell in a finite twisted Coxeter group, we should be able to construct a modular category equipped with a modular autoequivalence, for example using the theory of Soergel bimodules (cf. \cite[\S7]{L4}). Then we expect to recover the Fourier matrix associated with this stable two-sided cell as the crossed S-matrix associated with the corresponding modular autoequivalence.

Let us now consider the case of unipotent groups defined over finite fields. In \cite{BD}, Boyarchenko and Drinfeld have developed a theory of character sheaves on unipotent groups. Let $G$ be a connected unipotent group over $\Fqcl$ and let $F:G\to G$ be an $\Fq$-Frobenius map. Then the irreducible characters of $G(\Fq)$ can be partitioned into families known as $\f{L}$-packets of irreducible characters. The $\f{L}$-packets are parametrized by the $F$-stable minimal idempotents in the braided triangulated category $\DG$. On the other hand, associated with each minimal idempotent $e\in \DG$ is a modular category $\M_e\subset e\DG$. The finite set of (isomorphism classes of) simple objects of $\M_e$ is known as the $\f{L}$-packet of character sheaves associated with $e$ and is denoted as $CS_e(G)$. If $e$ is an $F$-stable minimal idempotent then we have the modular autoequivalence 
\beq
F:={F^{*}}^{-1}:\M_e\rar{}\M_e.
\eeq
In \cite{De1} it was shown that the transition matrix between the irreducible characters in the $\f{L}$-packet associated with $e$ and the ``trace of Frobenius'' functions of $F$-stable character sheaves $CS_e(G)^F$ is the crossed S-matrix $S(\M_e,F)$. We refer to \cite{BD}, \cite{De1}, \cite{De2} for a detailed exposition.

In fact we expect that there should be an interesting theory of character sheaves on general algebraic groups over finite fields and that in general the transition matrices between characters and character sheaves should again be given by certain crossed S-matrices.

\subsection{Preliminaries on modular categories}\label{s:pamc}
In this section we only briefly recall the notion of a modular category. We refer to \cite{BK}, \cite{ENO1}, \cite{DGNO} for a detailed exposition of the theory of fusion categories, braided fusion categories and modular categories.  

Let $\C$ be a semisimple abelian $k$-linear category with finitely many simple objects. Let $\otimes:\C\times \C\rar{}\C$ be an associative tensor product with unit object $\un$ that gives $\C$ the structure of a monoidal category. Let us assume that $\un$ is a simple object and that $\C$ is a rigid monoidal category. This means that every object $C\in \C$ has a rigid dual, namely an object $C^*\in \C$, an evaluation map $\e_C:C^*\otimes C\rar{}\un$ and a coevaluation map $\c_C:\un\rar{}C\otimes C^*$  satisfying some compatibility relations.

A category $\C$ as above is known as a fusion category. The notion of the categorical dimension $\dim\C\in k$ of a fusion category $\C$ is defined in \cite{ENO1}. By the results of \cite{ENO1}, the categorical dimension $\dim \C\in k$ is in fact a totally positive cyclotomic integer. 

In any fusion category the double duality functor $C\mapsto C^{**}$ is a monoidal autoequivalence of $\C$. A pivotal structure on $\C$ is a monoidal isomorphism 
\beq
\psi:\id_\C\rar{\cong}(\cdot)^{**}.
\eeq
In any pivotal category we can define categorical dimensions $\dim_\pm(C)$ of objects $C\in \C$. Let $\Q^{\h{ab}}\subset k$ denote the subfield of cyclotomic numbers, i.e. the subfield generated by all the roots of unity in $k$. By the Kronecker-Weber theorem $\Qab$ is the abelian closure of $\Q$. The subfield $\Qab\subset k$ has a special automorphism known as ``complex conjugation''  which maps each root of unity to its inverse.

By \cite[\S2]{ENO1}, $\dim_\pm(C)\in \Qab$ and we have $\overline{\dim_+(C)}=\dim_+(C^*)=\dim_-(C)$. We say that the pivotal structure is spherical if $\dim_+(C)=\dim_+(C^*)$. In this paper we will mostly only encounter spherical fusion categories and in this case we will simply denote the dimension as $\dim$.  In any spherical fusion category, the categorical dimensions of all objects in $\C$ are totally real cyclotomic integers.

We say that a fusion category is braided if it is also equipped with braiding isomorphisms $$\beta_{C_1,C_2}:C_1\otimes C_2\rar{\cong}C_2\otimes C_1$$ functorial in $C_1,C_2\in \C$ and satisfying some compatibility relations. A pre-modular category $\C$ is a braided fusion category equipped with a spherical structure. The notion of a spherical structure on a braided fusion category is equivalent to that of a ribbon twist structure. A ribbon twist in a braided fusion category $\C$ is a natural isomorphism $\theta:\id_\C\to \id_C$ such that $\theta_{C^*}=\theta^*_C$ for all $C\in \C$ and
\beq
\theta_{C_1\otimes C_2}=(\theta_{C_1}\otimes \theta_{C_2})\circ \beta_{C_2,C_1}\circ \beta_{C_1,C_2}\mbox{ for any $C_1,C_2\in \C$}.
\eeq

The (unnormalized) S-matrix $S(\C)$ of a pre-modular category $\C$ is an $\O_\C\times \O_\C$ matrix defined by $S(\C)_{C_1,C_2}=\tr(\beta_{C_2,C_1}\circ \beta_{C_1,C_2})$ for $C_1,C_2\in \O_\C$.  We say that a pre-modular category $\C$ is modular if its S-matrix is invertible.

\subsection{Modular autoequivalences and invertible module categories}\label{s:maimc}
Let $\C$ be a $k$-linear modular category with twist denoted by $\theta$. We will now recall some facts from \cite{ENO2} that we will use throughout this paper. We refer to {\it op. cit.} for a detailed account. Let $\uEqMod(\C)$ denote the categorical 1-group (i.e. a monoidal groupoid with invertible objects) whose objects are modular autoequivalences of $\C$ and morphisms are natural isomorphisms between modular autoequivalences. Let $\EqMod(\C)$ denote the group of modular autoequivalences of $\C$ up to isomorphism. We note that $\uEqMod(\C)$ is a full 1-subgroup of the categorical 1-group $\uEqBr(\C)$ of all braided autoequivalences of $\C$ that is studied in \cite{ENO2}. 

Following \cite{ENO2}, let $\uuPic(\C)$ denote the categorical 2-group whose objects are invertible $\C$-module categories, 1-morphisms are equivalences of $\C$-module categories and 2-morphisms are natural isomorphisms. Let $\uuPic^{\tr}(\C)\subset \uuPic(\C)$ be the full 2-subgroup formed by those invertible $\C$-module categories which can be equipped with a $\C$-module trace. A $\C$-module trace $\tr_\M$ on $\M$ assigns a trace $\tr_M(f)\in k$ for each endomorphism $f:M\to M$ in $\M$ satisfying some properties (cf. \cite{Sch}) and in particular we can talk of dimensions $\dim_\M$ of objects of $\M$. The compatibility of the trace $\tr_\M$ with the spherical structure on $\C$ means that we must have $\dim_\M(C\otimes M)=\dim(C)\cdot \dim_\M(M)$ whenever $C\in \C$ and $M\in \M$. If such a $\C$-module trace exists, then it must be unique up to scaling. We will often assume that the trace is normalized in such a way that
\beq\label{e:trnorm}
\dim(\underline{\Hom}(M,N))=\dim_\M(M)\cdot \dim_\M(N) \mbox{ for $M,N\in \M$,}
\eeq
where $\underline\Hom(M,N)\in \C$ is the internal Hom. With this additional condition, $\tr_\M$ is uniquely defined up to scaling by $\pm 1$ and we have $\sum\limits_{M\in \O_\M}\dim_\M(M)^2=\dim \C=\sum\limits_{C\in \O_\C}\dim(C)^2$, where $\O_\M$ denotes the set of isomorphism classes of simple objects of $\M$. Moreover with such a normalization, $\dim_\M(M)$ must be a totally real cyclotomic integer for each $M\in \M$.

Now by \cite[Thm. 5.2]{ENO2} the 1-truncation $\uPic(\C)$ is equivalent to $\uEqBr(\C)$ as a categorical 1-group. This induces an equivalence
\beq\label{e:eqmod}
\uEqMod(\C)\rar{\cong} \uPic^{\tr}(\C) \mbox{ of 1-subgroups.}
\eeq

Hence given a modular autoequivalence $F:\C\rar{}\C$, we have an associated invertible $\C$-module category $\M$ which admits a $\C$-module trace. 

We will define the crossed S-matrix associated with $F$ in terms of the $\C$-module category $\M$. In fact the rows of the crossed S-matrix $S(\C,F)$ are parametrized by the set $\O_\C^F$ whereas the columns are parametrized by $\O_\M$. We will prove that the matrix $(\dim \C)^{-\frac{1}{2}}\cdot S(\C,F)$  is unitary. As a corollary we obtain another proof of the fact that $|\O_\C^F|=|\O_\M|$. In view of (\ref{e:eqmod}), we may sometimes denote the crossed S-matrix by $S(\C,\M)$ instead of $S(\C,F)$.

\section{Definitions and main results}\label{s:dmr}
\subsection{The crossed S-matrix associated with a modular autoequivalence}\label{s:crsmat}
Let $F:\C\rar{}\C$ be a modular autoequivalence. In other words, we have an action of $\Z$ on $\C$ by modular autoequivalences. Let $\M$ be the invertible $\C$-module category associated with $F$ under (\ref{e:eqmod}). By definition this means that for $C\in \C$, $M\in \M$ we have functorial crossed braiding isomorphisms
\beq
\beta_{M,C}:M\otimes C \rar{\cong} F(C)\otimes M ,
\eeq
\beq
\beta_{C,M}:C\otimes M \rar{\cong} M\otimes C
\eeq
in the module category $\M$.

By \cite[Thm. 4.15]{ENO2}, there must be an $n$ such that $F^n\cong \id_\C$ giving us a group homomorphism 
\beq
\Z/n\Z\rar{}\EqMod(\C)\cong\Pic^{\tr}(\C).
\eeq 
However, by \cite[\S8]{ENO2} this may not lift to a modular action of $\Z/n\Z$ on $\C$, with the obstruction being given by some element of $H^3(\Z/n\Z,\O_\C^\times)$, where $\O_\C^\times$ is the group of isomorphism classes of invertible (and hence simple) objects of $\C$. We can choose a positive integer $N\in n\Z$ such that the pullback of the obstruction vanishes in  $H^3(\Z/N\Z,\O_\C^\times)$. We now choose a lift 
\beq
\Z/N\Z\rar{}\uEqMod(\C)\cong\uPic^{\tr}(\C).
\eeq
In particular we have a natural isomorphism $\id_\C\rar{\xi}F^N$ of modular autoequivalences and this induces an action of $\Z/N\Z$ on $\C$ by modular autoequivalences.

We will now define the crossed S-matrix $S(\C,\M)$. First we choose a $\C$-module trace $\tr_\M$ on $\M$ normalized according to (\ref{e:trnorm}). For each object $C\in \O_\C^F$, we fix an isomorphism $\psi_C:F(C)\rar{\cong}C$ such that $(C,\psi_C)$ determines an object of the $\Z/N\Z$-equivariantization $\C^{\Z/N\Z}$. In other words, we choose $\psi_C$ in such a way that the composition
\beq\label{e:psicondi}
C\rar{\xi_C} F^N(C)\xto{F^{N-1}(\psi_C)} F^{N-1}(C)\cdots\xto{F(\psi_C)}F(C)\xto{\psi_C}C
\eeq
equals $\id_C$. Hence we see that there are $N$ choices for $\psi_C$ which differ by scaling by $N$-th roots of unity. Furthermore, we will always make the convention that $\psi_\un=\id_\un$, although this is not strictly necessary.
\bdefn\label{d:crsmat}
For $C\in \O_\C^F$ and $M\in \O_\M$ we define the automorphism $\g_{C,M}$ as the composition
\beq
\g_{C,M}: C\otimes M\xto{\beta_{C,M}}M\otimes C\xto{\beta_{M,C}}F(C)\otimes M\xto{\psi_C\otimes \id_M} C\otimes M.
\eeq
The crossed S-matrix $S(\C,\M)$ is the $\O_\C^F\times \O_\M$ matrix whose $(C,M)$-th entry is 
\beq
S(\C,\M)_{C,M}:=\tr_\M(\g_{C,M})\in k.
\eeq
\edefn

\brk\label{r:de12diff}
(i) The crossed S-matrix depends on the choice of the $\psi_C$. A different choice of $\psi_C$ amounts to multiplying the row corresponding to $C$ by an $N$-th root of unity. Note that with our fixed choice of $\psi_\un=\id_\un$, we have $S(\C,M)_{\un,M}=\dim_\M (M)$.\\
(ii) The definition of the crossed S-matrix presented here is slightly different from that given in \cite{De1}. Firstly, in {\it op. cit.} we did not impose the restriction (\ref{e:psicondi}) on the isomorphisms $\psi_C$. Secondly, we had used the inverse of the crossed braidings instead of the crossed braidings in the definition of $\g_{C,M}$. This essentially amounts to replacing $C$ by its dual $C^*$, or as we will see, it essentially amounts to looking at the complex conjugate of the crossed S-matrix defined here. As of now, we only know that the entries of the crossed S-matrix lie in $k$. However, we will prove that in fact they lie in $\Qab$ and hence the complex conjugate of the crossed S-matrix is unambiguously defined.
\erk

\subsection{The Frobenius algebra associated with a modular autoequivalence}\label{s:faawma}
As seen before, the modular autoequivalence $F:\C\rar{}\C$ defines a modular action of $\Z$ on $\C$. Let us form the $\f{Z}$-equivariantization $\C^\Z$ from this action. The objects of $\C^\bZ$ can be thought of as pairs $(C,\psi)$, where $C\in \C$ and $\psi:F(C)\xto{\cong} C$. This is a (nonsemisimple) spherical braided monoidal abelian category. Hence the Grothendieck ring $K_0(\C^{\f{Z}})$ is a commutative ring. 

Let us consider the trivial modular category $\Vec$ of finite dimensional $k$-vector spaces equipped with the trivial action of $\f{Z}$. Then $\Vec^{\f{Z}}$ is the category of finite dimensional $k$-vector spaces $V$ equipped with an automorphism $\psi$, or equivalently an action of $\f{Z}$. We have the ring homomorphism $K_0(\Vec^\Z)\rar{}k$ that takes the class of $(V,\psi)\in \Vec^\Z$ to $tr(\psi)\in k$. We can identify $K_0(\Vec^\Z)$ with the group ring $\Z[k^\times]$ so that the element $[\alpha]\in\Z[k^\times]$ corresponding to group element $\alpha\in k^\times$ gets identified with the class of a $1$-dimensional vector space equipped with the automorphism of multiplication by $\alpha$. Our morphism $K_0(\Vec^\Z)\rar{}k$ takes $[\alpha]\in \Z[k^\times]$ to $\alpha\in k$.

For any modular category $\C$ equipped with a $\f{Z}$ action, we have the braided functor $\Vec^{\f{Z}}\to \C^{\f{Z}}$ which induces a ring homomorphism $K_0(\Vec^\Z)\rar{}K_0(\C^\Z)$. Now define the commutative $k$-algebra 
\beq
K({\C,F}):=K_0(\C^\Z)\otimes_{K_0(\Vec^\Z)}k.
\eeq 

\bdefn\label{d:linfun}
The algebra $K({\C,F})$ is equipped with a canonical linear functional $\lambda:K({\C,F})\rar{}k$ such that $\lambda(1)=1$ defined as follows:\\
The functor $\C^\bZ\rar{}\Vec^\bZ$ that takes $C\in\C^\bZ$ to $\Hom(\mathbf{1},C)\in\Vec^\bZ$ induces a morphism of $K_0(\Vec^\bZ)$-modules $K_0(\C^\bZ)\rar{}K_0(\Vec^\bZ)$ and therefore a $k$-linear functional $\lambda:K({\C,F})\rar{}k$ as desired.
\edefn

The following result was proved in \cite[\S7.1]{De1}. We recall it below along with the the proof for completeness.
\begin{prop}\label{p:de1frobalg} (\cite[Prop. 7.2]{De1})
(i) We have $\dim K({\C,F})=|\O_\C^F|$, where $\O_\C^F$ is the set of $F$-fixed (equivalently $\Z$-fixed) elements of ${\O_\C}$.\\
(ii) The bilinear form $(a_1,a_2)\longmapsto\lambda(a_1a_2)$ for $a_1,a_2\in K({\C,F})$, is nondegenerate. In other words, $K({\C,F})$ is a Frobenius algebra. 
\end{prop}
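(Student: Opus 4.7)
The plan is to classify the simple objects of $\C^{\bZ}$ explicitly, determine which of their classes survive after tensoring with $k$ over $K_0(\Vec^{\bZ})$, and then compute the Gram matrix of the proposed Frobenius form on the resulting spanning set. Since $F^N\cong\id_\C$ for some $N$ by \cite[Thm.~4.15]{ENO2}, the permutation of $\O_\C$ induced by $F$ has finite orbits, and by semisimplicity of $\C$ the simple objects of $\C^{\bZ}$ will split into two families. First, for each $F$-stable simple $C\in\O_\C^F$ and each $\alpha\in k^\times$, the ``scalar twist'' $(C,\alpha\psi_C)$, where $\psi_C\colon F(C)\xrightarrow{\cong}C$ is any fixed isomorphism. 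Second, for each $F$-orbit $\{C_0,\ldots,F^{k-1}(C_0)\}$ of length $k>1$ in $\O_\C$ and each monodromy $\mu\in k^\times$, an ``induced'' equivariant object $(V_\mu,\psi_\mu)$ on $V=\bigoplus_i F^i(C_0)$. A direct rescaling of the $i$-th orbit component by $\zeta^i$ will establish the symmetry $(V_\mu,\psi_\mu)\cong(V_\mu,\zeta\psi_\mu)$ whenever $\zeta^k=1$.

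Next I would extract the relations imposed by the $K_0(\Vec^{\bZ})$-action. Tensoring with $[(k,\alpha\cdot)]\in K_0(\Vec^{\bZ})$ sends $(C,\psi)$ to $(C,\alpha\psi)$, so in $K(\C,F)$ this operation becomes multiplication by the scalar $\alpha$. Applied to the first family this yields $[(C,\alpha\psi_C)]=\alpha\cdot[(C,\psi_C)]$, collapsing each tower of scalar twists to a single class. Applied to the second family together with the root-of-unity symmetry above, it forces $\zeta\cdot[(V_\mu,\psi_\mu)]=[(V_\mu,\psi_\mu)]$ for a primitive $k$-th root of unity $\zeta\neq 1$, hence $[(V_\mu,\psi_\mu)]=0$ in $K(\C,F)$. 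Therefore $K(\C,F)$ is spanned by the finite set $\{[(C,\psi_C)]:C\in\O_\C^F\}$, yielding the upper bound $\dim K(\C,F)\le|\O_\C^F|$.

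For the matching lower bound and for (ii), I would compute the Gram matrix of the proposed Frobenius pairing on this spanning set. The tensor product in $\C^{\bZ}$ is $(C_1,\psi_1)\otimes(C_2,\psi_2)=(C_1\otimes C_2,\psi_1\otimes\psi_2)$, and for simple $F$-stable $C_1,C_2$ the space $\Hom(\un,C_1\otimes C_2)$ is one-dimensional when $C_2\cong C_1^*$ and zero otherwise. Using the freedom to rescale each $\psi_C$ by an element of $k^\times$, and keeping the convention $\psi_{\un}=\id_\un$, one can normalize the family $\{\psi_C\}_{C\in\O_\C^F}$ compatibly with rigidity so that the equivariant scalar by which $\psi_{C}\otimes\psi_{C^*}$ acts on $\un\hookrightarrow C\otimes C^*$ is $1$. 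With such a choice,
\[
\lambda\!\left([(C_1,\psi_{C_1})]\cdot[(C_2,\psi_{C_2})]\right)=\delta_{C_2,\,C_1^*}.
\]
The Gram matrix of $(a_1,a_2)\mapsto\lambda(a_1a_2)$ in the spanning set is therefore the permutation matrix for $C\mapsto C^*$, which is invertible. This simultaneously shows that the spanning set is linearly independent (giving $\dim K(\C,F)=|\O_\C^F|$, proving (i)) and that the bilinear form $\lambda(a_1a_2)$ is nondegenerate (proving (ii)).

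The main obstacle will be the first step: carrying out the classification of simple objects of $\C^{\bZ}$ and, in particular, verifying the rescaling isomorphism $(V_\mu,\psi_\mu)\cong(V_\mu,\zeta\psi_\mu)$ for $\zeta^k=1$, which is what makes the induced objects vanish after base change to $k$. Once this is established, the remainder is a routine unpacking of the equivariant and rigid tensor structures.
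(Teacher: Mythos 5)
Your proof is correct and takes essentially the same route as the paper: the paper's decomposition of $K_0(\C^\bZ)$ into $\bZ[k^\times]$-submodules indexed by orbits, with $K_0(\C_O^\bZ)\cong\bZ[k^\times]/I_n$, is exactly your explicit classification of simples together with the rescaling isomorphism $(V_\mu,\psi_\mu)\cong(V_\mu,\zeta\psi_\mu)$ that kills the orbit-supported classes after tensoring with $k$, and the paper proves (ii) by the same observation that the pairing $L_C\times L_{C'}\to k$ is nonzero iff $C'\cong C^*$. One minor remark: the normalization of the $\psi_C$ to make the Gram matrix an honest permutation matrix is unnecessary overhead (and is a slight detour in the self-dual case); any choice of $\psi_C$ already gives a monomial Gram matrix, which is invertible.
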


\begin{proof}
Let us describe $K_0(\C^{\bZ})$ as a module over $K_0(\Vec^\bZ)=\bZ[k^\times]$ in terms of the $\Z$-orbits $O$ in the set $\O_\C$ of simple objects of $\C$. We have
\[
\C=\bigoplus_O \C_O, \quad \C^\bZ=\bigoplus_O \C_O^\bZ,
\]
where $O$ runs through the $\bZ$-orbits in ${\O_\C}$ and $\C_O\subset\C$ is the full subcategory consisting of objects of $\C$, all of whose irreducible components are in $O$. Hence
\beq
K_0(\C^\bZ)=\bigoplus_O K_0(\C_O^\bZ).
\eeq
 We want to show that once we tensor with $k$, only the summands supported on the singleton orbits (i.e. $\O_\C^F$) survive. It is easy to see that if $|O|=n$, then $K_0(\C_O^\bZ)\subset K_0(\C^\bZ)$ is a $\bZ[k^\times]=K_0(\Vec^\Z)$-submodule, which is (noncanonically) isomorphic to $\bZ[k^\times]/I_n$, where $I_n\subset\bZ[k^\times]$ is the ideal generated by elements of the form $[\zeta]-1$, where $\zeta\in k^\times$ is such that $\zeta^n=1$. If $n>1$, then $\zeta^n=1$ for some $\zeta\in k\setminus\{1\}$, so $K_0(\C_O^\bZ)\tensor_{\bZ[k^\times]}k=0$. If $n=1$ i.e. if $O=\{C\}$ for some $C\in \O_\C^F$, then $\dim_{k} \left( K_0(\C_{\{C\}}^\bZ)\tensor_{\bZ[k^\times]}k\right)=1$. So $K({\C,F})$ is the direct sum of the lines $L_C:=K_0(\C^\bZ_{\{C\}})\tensor_{\bZ[k^\times]}k$, where $C$ runs over $\O_\C^F$. This implies statement (i).

Given $C,C'\in \O_{\C}^F$, the pairing $L_C\times L_{C'}\rar{}k$, induced by the bilinear form $(a_1,a_2)\longmapsto\lambda(a_1a_2)$, is nonzero if and only if $C'$ is dual to $C$. This implies (ii).
\end{proof}

\brk\label{r:connuni}
Let $G$ be a connected unipotent group over $\Fqcl$ equipped with a $\Fq$-Frobenius map $F$. In our analogy with the theory of character sheaves, the commutative Frobenius algebra $K(\C,F)$ is the analogue of the algebra of class functions on the finite group $G(\Fq)$. The map which sends an object $(C,\psi)\in \C^\Z$ to its class in the algebra $K(\C,F)$ is analogous to the sheaf-function correspondence. We refer to \cite[\S7]{De1} for details.
\erk

\subsection{Semisimplicity of the Frobenius algebra $K(\C,F)$}\label{s:sfa}
In this section we will prove that the Frobenius $k$-algebra $K(\C,F)$ is semisimple. In the previous section we defined the algebra $K(\C,F)$ by looking at the non-semisimple category $\C^\Z$. Let us now give an alternative description within the realm of fusion categories.

Recall from \S\ref{s:crsmat} that the modular autoequivalence $F$ can be lifted to define an action of $\Z/N\Z$ on $\C$ by modular autoequivalences. Now since $\Z/N\Z$ is a finite group, the equivariantization $\C^{\Z/N\Z}$ is a (possibly degenerate) braided fusion category.  Let us form the Grothendieck ring $K_0(\C^{\Z/N\Z})$. It is a based ring in the sense of \cite{L2} and hence $K_0(\C^{\Z/N\Z})\otimes \Q$ is a semisimple $\Q$-algebra (by \cite[\S1.2]{L2}). Let $\omega$ be a primitive $N$-th root of unity and let us extend scalars and consider the $\Z[\omega]$-algebra $K_0(\C^{\Z/N\Z})\otimes_\Z\Z[\omega]$. Now the pair $(\un, \omega:\un\rar{\cong}\un)$ is a simple object of $\C^{\Z/N\Z}$. Note that the class of $(\un,\id_\un)$ is the unit in $K_0(\C^{\Z/N\Z})\otimes_\Z\Z[\omega]$. Let us form the quotient algebras
\beq
K_{\Z[\omega]}(\C,F):=\frac{K_0(\C^{\Z/N\Z})\otimes_\Z\Z[\omega]}{\<[(\un,\omega)]-\omega\>} \mbox{ and } K_{\Q[\omega]}(\C,F):=\frac{K_0(\C^{\Z/N\Z})\otimes_\Z\Q[\omega]}{\<[(\un,\omega)]-\omega\>}.
\eeq
As in Definition \ref{d:crsmat}, for each $C\in \O_\C^F$ choose $\psi_C:F(C)\rar{\cong}C$ satisfying (\ref{e:psicondi}). Repeating the argument used in the proof of Proposition \ref{p:de1frobalg}, we see that the set of classes $\{[(C,\psi_C)]|C\in \O_\C^F\}$ forms a $\Z[\omega]$-basis of $K_{\Z[\omega]}(\C,F)$. As the quotient of a semisimple $\Q[\omega]$-algebra, $K_{\Q[\omega]}(\C,F)$ is also a semisimple $\Q[\omega]$-algebra. 

Using a similar argument to the one in \S\ref{s:faawma}, we have a ring homomorphism 
\beq
\Z\mu_N=K_0(\Vec^{\Z/N\Z})\rar{}K_0(\C^{\Z/N\Z}),
\eeq
where $\Z\mu_N$ denotes the group ring of the group $\mu_N$ of $N$-th roots of unity. We have the `tautological' map $\Z\mu_n\rar{}\Z[\omega]$. It is clear that we have an identification
\beq
K_{\Z[\omega]}(\C,F)\=K_0(\C^{\Z/N\Z})\otimes_{K_0(\Vec^{\Z/N\Z})}\Z[\omega].
\eeq

Combining all our observations, we deduce
\blem\label{l:altdes}
We have an isomorphism of $k$-algebras
\beq
K(\C,F)\cong K_{\Z[\omega]}(\C,F)\otimes_{\Z[\omega]}k.
\eeq
The algebra $K(\C,F)$ is semisimple. It has a basis $\{[(C,\psi_C)]\}_{C\in \O_\C^F}$ with respect to which the multiplication in $K(\C,F)$ has structure constants that lie in $\Z[\omega]$, i.e. for $C,C'\in \O_\C^F$
\beq\label{e:strucconst}
[(C,\psi_C)]\cdot [(C',\psi_{C'})]=\sum\limits_{D\in \O_\C^F} a_{C,C'}^D[(D,\psi_D)] \mbox{ with $a_{C,C'}^D\in \Z[\omega]$,}
\eeq
and the basis element $[(\un,\psi_\un=\id_\un)]$ is the unit of $K(\C,F)$.
\elem

\brk
The semisimplicity of the commutative $k$-algebra $K(\C,F)$ implies that it must be isomorphic to $k^{|\O_\C^F|}$ as an algebra. In other words, $K(\C,F)$ has a basis formed by the minimal idempotents $e\in K(\C,F)$.  Moreover, if $e,e'\in K(\C,F)$ are minimal idempotents, we have $\l(ee')=\delta_{e,e'}\l(e)$. Hence by Proposition \ref{p:de1frobalg}(ii) we must have $\l(e)\neq 0$ for each minimal idempotent $e\in K(\C,F)$. We will prove that the set of the minimal idempotents in $K(\C,F)$ can be parametrized by the set $\O_\M$ of simple objects of the $\C$-module category $\M$. We will see that the matrix relating the basis formed by the minimal idempotents (suitably normalized) in $K(\C,F)$ to the basis formed by  $\{[(C,\psi_C)]\}_{C\in \O_\C^F}$ is precisely the crossed S-matrix $S(\C,F)$. (See Corollary \ref{c:minid}.)
\erk

\subsection{Construction of a spherical braided crossed category}\label{s:csbcc}
In \S\ref{s:crsmat}, we defined a modular action of $\Z/N\Z$ on $\C$:
\beq\label{e:modaction}
\Z/N\Z\rar{}\uEqMod(\C)\cong\uPic^{\tr}(\C).
\eeq

Now note that the group cohomology $H^4(\Z/N\Z,k^\times)$ of the cyclic group $\Z/N\Z$ is trivial. Hence by \cite[\S8]{ENO2} we can lift the modular action to a map of 2-groups
\beq\label{e:d'}
\Z/N\Z\rar{}\uuPic^{\tr}(\C).
\eeq
We choose one such lift. Let $\D'$ denote the corresponding braided $\Z/N\Z$-crossed category (cf. \cite{ENO2}) so that we have
\beq
\D'=\bigoplus\limits_{a\in \Z/N\Z}\M_a
\eeq
with $\C=\M_0$ and $\M=\M_1$.

We refer to \cite{DGNO} for more on the notion of braided crossed categories. We recall that in particular this means that we have a monoidal action of $\Z/N\Z$ on $\D'$ agreeing with the braided monoidal action of $\ZN$ on $\C=\M_0$. For $a\in \ZN$, we let $F^a:\D'\rar{}\D'$ denote the action of $a$ on $\D'$. Since $\ZN$ is an abelian group, the monoidal action of $\ZN$ preserves each $\M_a$. If $L\in \M_a$, then $F^a(L)\cong L$. Moreover, for any $a\in \ZN$, $L\in\M_a$ and $D\in \D'$ we have functorial crossed braiding isomorphisms
\beq
\beta_{L,D}:L\otimes D\rar{\cong}F^a(D)\otimes L
\eeq
that satisfy certain compatibility relations.

In \S\ref{s:crsmat}, we have also chosen a normalized $\C$-module trace $\tr_\M$ on $\M$. For each $a\in \Z$, this induces a normalized $\C$-module trace $\tr_{\M^{\boxtimes a}}$ on the invertible $\C$-module category $\M^{\boxtimes a}$. Now (\ref{e:modaction}) gives us (an isomorphism class of) a $\C$-module equivalence $\C\cong \M^{\boxtimes N}$. Now the normalized $\C$-module trace $\tr_{\M^{\boxtimes N}}$ on $\M^{\boxtimes N}$ either agrees with the spherical trace in $\C$ or is the opposite of it. If it turns out to be the opposite, it means that $\D'$ cannot be equipped with a spherical structure that agrees with the one on $\C$ as well as with the chosen $\C$-module trace on $\M$.

\brk 
If the spherical structure on $\C$ is positive, then we can choose the positive normalized $\C$-module trace on $\M$ and in this case the trace $\tr_{\M^{\boxtimes N}}$ must necessarily agree with the (positive) spherical trace on $\C$. Consequently, if $\C$ is positive spherical then $\D'$ can be equipped with a spherical structure compatible with that on $\C$.
\erk

In any case, the trace $\tr_{\M^{\boxtimes 2N}}$ on $\M^{\boxtimes 2N}$ agrees with the spherical trace on $\C$.
Now consider the map of 2-groups  
\beq\label{e:2gpd}
\Z/2N\Z\onto\Z/N\Z\rar{}\uuPic^{\tr}(\C)
\eeq
induced from (\ref{e:d'}). Let $\D$ denote the corresponding braided $\Z/2N\Z$-crossed category so that we have
\beq
\D=\bigoplus\limits_{a\in \Z/2N\Z}\M_a
\eeq
with $\M_1=\M$. Moreover for each $a\in \Z/2N\Z$, $\M_a$ is equipped with a $\C$-module trace that is compatible with the tensor products in $\D$ and extending the traces on $\C,\M$. In other words the traces define a spherical structure on $\D$. From (\ref{e:2gpd}) we see that we have an identification $\M_a\cong \M_{a+N}$ as $\C$-module categories and that the action of $\Z/2N\Z$ on $\D$ in fact comes from an action of $\Z/N\Z$ on $\D$. Note however that the $\C$-module traces on $\M_a$ and $\M_{a+N}$ may differ by a sign. 

By \cite[Prop. 4.56]{DGNO}, the equivariantization $\D^{\Z/2N\Z}$ is a modular category. We will usually denote objects of $\Deq$ as pairs $(D,\psi)$ with $D\in \D$ and $\psi:F(D)\rar{\cong}D$. For $a\in\ZbN$, we denote by $\psi^{(a)}:F^a(D)\rar{\cong}D$, the isomorphism obtained from $\psi$. We will see that the (unnormalized) crossed S-matrix $S(\C,F)$ occurs as a submatrix in the (unnormalized) S-matrix of the modular category $\D^{\Z/2N\Z}$.



\subsection{Unitarity of the crossed S-matrix}\label{s:ucsm}
Let us now state our first main results. Recall that we have introduced the spherical braided $\Z/2N\Z$-crossed category $\D$ and the modular category $\D^{\Z/2N\Z}$. Let $S({\D^{\Z/2N\Z}})$ denote the S-matrix of this modular category. Note also that for each $C\in \O_\C^F$, the pair $(C,\psi_C)$ defines a simple object of $\C^{\Z/N\Z}\subset \C^{\Z/2N\Z}\subset \D^{\Z/2N\Z}$. Similarly for each $M\in \O_\M$, let us choose an isomorphism $\psi_M:F(M)\rar{\cong}M$ such that we have
\beq\label{e:psimcondi}
\id_M:M\rar{\xi_M} F^N(M)\xto{F^{N-1}(\psi_M)} F^{N-1}(M)\cdots\xto{F(\psi_M)}F(M)\xto{\psi_M}M.
\eeq
Then the pair $(M,\psi_M)$ defines a simple object in $\M^{\Z/N\Z}\subset \D^{\Z/2N\Z}$.

\bthm\label{t:main}
(i) The crossed S-matrix $S(\C,F)$ is a submatrix of the S-matrix $S({\D^{\Z/2N\Z}})$, namely for $C\in \O_\C^F, M\in \O_\M$ we have
\beq
S(\C,F)_{C,M}=\SD_{(C,\psi_C),(M,\psi_M)}.
\eeq
(ii) The entries of $S(\C,F)$ are cyclotomic integers. If $C\in \O_\C^F, M\in \O_\M$, then in fact  $\frac{S(\C,F)_{C,M}}{\dim C}$ and $\frac{S(\C,F)_{C,M}}{\dim_\M(M)}$ are cyclotomic integers.\\
(iii) In view of (ii), it is meaningful to take the complex conjugate transpose of $S(\C,F)$. Moreover, we have
\beq
S(\C,F)\cdot \overline{S(\C,F)}^T = \dim \C\cdot I = \overline{S(\C,F)}^T\cdot S(\C,F).
\eeq
(iv) For $C\in \O_\C^F, M\in \O_\M$, we have $(\psi_{C^*}\circ\psi_C^*)\cdot \overline{S(\C,F)}_{C,M}=S(\C,F)_{C^*,M}$.
\ethm

We will prove a more general version of this result in \S\ref{s:pmrg}. In the course of the proof, we will see that $\SD_{(C,\psi_C),(M,\psi_M)}$ does not depend on the choice of the isomorphism $\psi_M$. However it does depend on our original choice of isomorphism $\psi_C$.

As a corollary we obtain the following well known fact:
\bcor
The sets $\O_\C^F$ and $\O_\M$ have the same cardinality.
\ecor

\brk
Let $G$ be as in Remark \ref{r:connuni}. The statement above is an analogue of the fact that the number of irreducible representations of $G(\Fq)$ is equal to the number of Frobenius stable character sheaves on $G$. We refer to \cite{De1} for precise statements.
\erk

We now relate the crossed S-matrix $S(\C,F)$ to the character table of the algebra $K(\C,F)$ and derive an analogue of the Verlinde formula in our situation. Let $M\in \O_\M$. Let us define a $k$-linear map $\chi_M:K(\C,F)\to k$. For $C\in \O_\C^F$, define 
\beq
\chi_M([(C,\psi_C)])=\frac{S(\C,F)_{C,M}}{\dim_\M(M)}
\eeq
and extend $k$-linearly. We will prove the following as a consequence of Theorem \ref{t:main}:
\bthm\label{t:conseq}
(i) For each $M$ in $\O_\M$, $\chi_M$ is a character of the $k$-algebra $K(\C,F)$ i.e. $\chi_M$ is a ring homomorphism. The map $M\mapsto \chi_M$ defines a bijection between the set $\O_\M$ and the set of irreducible characters of the algebra $K(\C,F)$. Equivalently, the map $\chi:K(\C,F)\rar{}\Fun_k(\O_\M)$ defined by $\chi([(C,\psi_C)])(M)=\chi_M([(C,\psi_C)])$ is an isomorphism of $k$-algebras.\\
(ii) For $C\in \O_\C^F$, let $A_C$ denote the $\O_\C^F\times \O_\C^F$ matrix of left multiplication by $[(C,\psi_C)]$ in $K(\C,F)$ with respect to the basis $\{[(C',\psi_{C'})]\}_{C'\in \O_\C^F}$, i.e. $(A_C)_{D,C'}=a_{C,C'}^D$ (cf. (\ref{e:strucconst})). Let $\Delta_C$ be the $\O_\M\times \O_\M$ diagonal matrix with $(\Delta_C)_{M,M}=\frac{S(\C,F)_{C,M}}{\dim_\M(M)}=\frac{S(\C,F)_{C,M}}{S(\C,F)_{\un,M}}$. Then
\beq
(A_C)^T\cdot S(\C,F)=S(\C,F)\cdot \Delta_{C}.
\eeq
(iii) (An analogue of Verlinde formula.) The structure constants of the algebra $K(\C,F)$ can be described in terms of the crossed S-matrix as follows:
\beq
a_{C,C'}^D=\frac{1}{\dim \C}\sum\limits_{M\in \O_\M} \frac{S(\C,F)_{C,M}\cdot S(\C,F)_{C',M}\cdot\overline{S(\C,F)}_{D,M}}{\dim_\M(M)}.
\eeq
As a consequence we see that the numbers described by the right hand side above lie in $\Z[\omega]$.
\ethm

\brk
Note that if the modular autoequivalence $F$ is such that it induces an action of $\Z/2\Z$ on $\C$, then the structure constants $a^D_{C,C'}$ must in fact be integral. If $F=\id_\C$, then the structure constants are positive integral.   
\erk

\bcor\label{c:minid}
For $M\in \O_\M$, we have the delta function $\delta_M$ which is an idempotent in $\Fun_k(\O_\M)$. Let us set $e_M:=\chi^{-1}(\delta_M)\in K(\C,F)$, where $\chi$ is as defined in Theorem \ref{t:conseq}(i). Then the set of idempotents in $K(\C,F)$ is equal to $\{e_M|M\in \O_\M\}$. Moreover, we have
\beq
S(\C,F)\cdot\left(\begin{array}{c}\vdots\\\frac{e_M}{\dim_\M M}\\\vdots\end{array}\right)=\left(\begin{array}{c}\vdots\\{[(C,\psi_C)]}\\\vdots\end{array}\right), \mbox{ or equivalently}
\eeq
\beq
e_M=\frac{\dim_\M M}{\dim \C}\sum\limits_{C\in \O_\C^F}\overline{S(\C,F)}_{C,M}[(C,\psi_C)] \mbox{ for all $M\in \O_\M$ and hence}
\eeq
\beq
\l(e_M)=\frac{(\dim_\M M)^2}{\dim \C} \mbox{ for all $M\in \O_\M$},
\eeq
where $\l$ is the linear functional on the Frobenius algebra $K(\C,F)$ (see Definition \ref{d:linfun}).
\ecor
\bpf
From the definition of $\chi$, we see that 
\beq
\chi([(C,\psi_C)])=\sum\limits_{M\in \O_\M}\frac{S(\C,F)}{\dim_\M M}\cdot\delta_M. 
\eeq
Applying $\chi^{-1}$ we get the desired results.
\epf

\brk\label{r:wkresults}
All the results stated in this section are well known in the case when the modular autoequivalence $F$ is the identity on $\C$ (for example see \cite{BK, ENO1}). For example it is well known that the S-matrix of a modular category $\C$ is unitary (up to a suitable normalization) and that it can be thought of as the character table of the algebra $K_0(\C)\otimes k=K(\C,\id_C)$. We will in fact use these well known results to prove our results for general $F$.
\erk

\brk
By Theorem \ref{t:main}(ii), all $S(\C,F)_{C,M}\in \Qab$. In view of this, we may replace the field $k$ by its subfield $\Qab$ and the $k$-algebra $K(\C,F)$ by the $\Qab$-algebra $\KabC:=K_{\Z[\omega]}(\C,F)\otimes_{\Z[\omega]}\Qab$ in Theorem \ref{t:conseq} and Corollary \ref{c:minid}. 
\erk

\brk
So far we have  seen two bases of the space $K(\C,F)$: $\{[(C,\psi_C)]|C\in \O_\C^F\}$ and $\{\frac{e_M}{\dim_\M M}|M\in \O_\M\}$. In our analogy with the theory of character sheaves (see also Remark \ref{r:connuni}), the former is analogous to the trace functions of $F$-stable character sheaves and the latter to the irreducible characters (cf. \cite{De1}).
\erk

\section{Proofs of the main results and generalizations}\label{s:pmrg}
In this section we will prove the main results stated in \S\ref{s:ucsm}. In fact we will now state and prove some more general versions of these results. In particular for each $a\in \ZbN$ we will define crossed S-matrices $S(\M_a,F)$ and study their properties. If $a=0$, we recover the crossed S-matrix $S(\C,F)$ defined previously. In our analogy with the theory of character sheaves, the more general crossed S-matrices $S(\M_a,F)$ are closely related to the notion of Shintani descent of Frobenius stable characters. In fact in \S\ref{s:sdmc} we will define the notion of Shintani descent in our setting of modular categories.

\subsection{The Frobenius $*$-algebra $K_{\Qab}(\D,F)$}\label{s:fak}
The duality in a fusion category equips its Grothendieck ring with a $*$-ring structure. Hence $K_0(\D^{\Z/2N\Z})$ is a commutative $*$-ring. The field $\Qab$ is equipped with complex conjugation. Hence $K_0(\D^{\Z/2N\Z})\otimes \Qab$ is a $*$-algebra over $\Qab$. Let $\omega'$ be a primitive $2N$-th root of unity. The ideal in  $K_0(\D^{\Z/2N\Z})\otimes \Qab$ generated by $[(\un,\omega')]-\omega'$ is a $*$-ideal. Hence the $\Qab$-algebra $K_{\Qab}(\D,F):=\frac{K_0(\D^{\Z/2N\Z})\otimes \Qab}{\<[(\un,\omega')]-\omega'\>}$ is also a $*$-algebra. Note that the $\Qab$-algebra $K_{\Qab}(\C,F):=K_{\Z[\omega]}(\C,F)\otimes_{\Z[\omega]}\Qab$ (cf. \S\ref{s:sfa}) is a $*$-subalgebra of $K_{\Qab}(\D,F)$. Moreover using similar arguments as in \S\ref{s:faawma},\S\ref{s:sfa}, we see that $K_{\Qab}(\D,F)$ is in fact a Frobenius $*$-algebra.
 
\bdefn
We define a Hermitian form $\<\cdot,\cdot\>$ on $K_{\Qab}(\D,F)$ by setting
\beq
\<a_1,a_2\>=\lambda(a_1a_2^*)
\eeq
where $\l:\KabD\rar{}\Qab$ is the linear functional that defines the Frobenius structure on $\KabD$.
\edefn

Let $\O_\D$ denote the set of simple objects of $\D$. The autoequivalence $F:\D\rar{}\D$ induces a permutation of the set $\O_\D$. Recall that the action of $\Z/2N\Z$ on $\D$ in fact comes from an action of $\Z/N\Z$. As in \S\ref{s:faawma},\S\ref{s:sfa}, we can prove the following:
\bprop\label{p:de1gen}
For each $D\in \O_\D^F$ choose $\psi_D:F(D)\rar{\cong}D$ so that $(D,\psi_D)$ defines an object of $\D^{\Z/N\Z}\subset \D^{\Z/2N\Z}$. Then the set $\{[(D,\psi_D)]|D\in \O_\D^F\}$ is an orthonormal basis of $\KabD$ and hence $\<\cdot,\cdot\>$ is a positive definite Hermitian form on $\KabD$. The structure constants of the algebra with respect to this basis lie in the ring $\Z[\omega]\subset \Qab$.
\eprop

We have the following grading on the modular category $\D^{\Z/2N\Z}$:
\beq
\D^{\Z/2N\Z}=\bigoplus\limits_{a\in \Z/2N\Z}\M_a^{\Z/2N\Z} 
\eeq
and hence we obtain the corresponding grading of $\Qab$-algebras
\beq
\KabD=\bigoplus\limits_{a\in \Z/2N\Z}K_{\Qab}(\M_a,F).
\eeq

\subsection{The characters of the algebra $\KabD$}\label{s:cak}
In this section we state more general versions of some of the results from \S\ref{s:ucsm}. We will see that the characters of the algebra $\KabD$ are parametrized by the set $\O_{\M^{\Z/2N\Z}}$ of simple objects in the component $\M^{\Z/2N\Z}\subset \D^{\Z/2N\Z}$.

\blem\label{l:smblocks}
Let $m_1,m_2\in \Z/2N\Z$ and let $(M_i,\psi_i)\in \M_{m_i}^{\Z/2N\Z}$ where $\psi_i:F(M_i)\rar{\cong}M_i$ determines the $\Z/2N\Z$-equivariance structure. Let $\zeta_1,\zeta_2$ be any $2N$-th roots of unity. Then $(M_i,\zeta_i\psi_i)\in \M_{m_i}^{\Z/2N\Z}$. If  $(M_i,\psi_i)\in \M_{m_i}^{\Z/2N\Z}$ are simple objects, then we have the following relationship between entries of the S-matrix of $\Deq$:
\beq
\SD_{(M_1,\zeta_1\psi_1),(M_2,\zeta_2\psi_2)}=\zeta_1^{m_2}\zeta_2^{m_1}\cdot\SD_{(M_1,\psi_1),(M_2,\psi_2)}.
\eeq
\elem
\bpf
By the definition of the equivariantization and the S-matrix, the above entries of the S-matrix are given by the traces of the following compositions
\beq
M_1\otimes M_2\xto{\beta_{M_1,M_2}}F^{m_1}(M_2)\otimes M_1\xto{\psi_2^{(m_1)}}M_2\otimes M_1\xto{\beta_{M_2,M_1}}F^{m_2}(M_1)\otimes M_2\xto{\psi_1^{(m_2)}}M_1\otimes M_2
\eeq
\beq
M_1\otimes M_2\xto{\beta_{M_1,M_2}}F^{m_1}(M_2)\otimes M_1\xto{\zeta_2^{m_1}\psi_2^{(m_1)}}M_2\otimes M_1\xto{\beta_{M_2,M_1}}F^{m_2}(M_1)\otimes M_2\xto{\zeta_1^{m_2}\psi_1^{(m_2)}}M_1\otimes M_2.
\eeq
Hence the lemma follows.
\epf

\bcor\label{c:0insd}
Suppose $M\in \O_\M=\O_\M^F$ and let $(M,\psi'_M)\in \M^{\Z/2N\Z}$. Let $(L,\psi'_L)\in \O_{\Deq}$. Suppose that $L\cong \bigoplus L_i$ as objects in $\D$ with $L_i\in \O_\D$. Since $(L,\psi'_L)\in \Deq$ is simple, the $L_i$ must be pairwise non-isomorphic and must form one single orbit $O$ for the $\Z/2N\Z$-action on $\O_\D$. If $O$ is not singleton, then we must have
\beq
\SD_{(L,\psi'_L),(M,\psi'_M)}=0.
\eeq
\ecor
\bpf
Suppose that $|O|=n>1$. Then, $n|N$ since the action of $\Z/2N\Z$ in fact comes from an action of $\Z/N\Z$ on $\O_\D$. Let $\zeta\neq 1$ be an $n$-th root of unity. Then it is easy to check that $(L,\zeta\psi'_L)\cong (L,\psi'_L)$ as objects of $\Deq$ (cf. proof of Proposition \ref{p:de1frobalg}). On the other hand, by Lemma \ref{l:smblocks} we have 
\beq
\SD_{(L,\zeta\psi'_L),(M,\psi'_M)}=\zeta\cdot\SD_{(L,\psi'_L),(M,\psi'_M)}.
\eeq
Hence we must have $\SD_{(L,\psi'_L),(M,\psi'_M)}=0.$
\epf

It is well known that the entries of the S-matrix $\SD$ lie in $\Qab$ (cf. \cite[\S10]{ENO1}). From this we deduce:
\bcor\label{c:kabdchars}
For each $D\in \O_\D^F$, choose $\psi_D$ as in Proposition \ref{p:de1gen} so that $\{[(D,\psi_D)]|D\in \O_\D^F\}$ is an orthonormal basis of $\KabD$. Let $(M,\psi'_M)$ be any simple object of $\M^{\Z/2N\Z}\subset \Deq$. Then the $\Qab$-linear map $\chi_{M,\psi'_M}:\KabD\rar{}\Qab$ defined by $\chi_{M,\psi'_M}([(D,\psi_D)])=\frac{\SD_{(D,\psi_D),(M,\psi'_M)}}{\dim_\M M}$ is in fact a 1-dimensional character. This establishes a bijection between the set $\O_{\M^{\Z/2N\Z}}$ and the set of irreducible characters of the algebra $\KabD$.
\ecor
\bpf
By \cite[Thm. 3.1.11]{BK} all the characters of the algebra $K_0(\Deq)\otimes \Qab$ are given by 
\beq
\chi_{L,\psi'_L}:[(D,\psi'_D)]\mapsto \frac{\SD_{(D,\psi'_D),(L,\psi'_L)}}{\dim_{\Deq} (L,\psi'_L)}=\frac{\SD_{(D,\psi'_D),(L,\psi'_L)}}{\SD_{(\un,\id_\un),(L,\psi'_L)}}
\eeq
where $(D,\psi'_D),(L,\psi'_L)$ are simple objects of $\Deq$. Then by Lemma \ref{l:smblocks} we see that $\chi_{L,\psi'_L}([(\un,\omega')])=\omega'$ if and only if $(L,\psi'_L)\in \M^{\Z/2N\Z}$, i.e. the character $\chi_{L,\psi'_L}$ of $K_0(\Deq)\otimes \Qab$ factors through the quotient $\KabD$ if and only if $(L,\psi'_L)\in \M^{\Z/2N\Z}$. This gives us the desired result.
\epf

\subsection{The matrices $S(\M_a,F)$}\label{s:msmf}
In this section we describe some other interesting matrices that occur as submatrices in the S-matrix of $\Deq$, namely for each $a\in \Z/2N\Z$ we will define the crossed S-matrix $S(\M_a,F)$ with rows parametrized by $\O^F_{\M_a}$ and columns parametrized by $\O_\M$. In general the matrix $S(\M_a,F)$ depends on certain choices and is only well defined up to rescaling rows and columns by roots of unity. We make this more precise in Remark \ref{r:choices} below. 
The choices that we need to make are:\\
(i) For each $L\in \O_{\M_a}^F$, we choose a lift $(L,\psi_L)\in \M_a^{\Z/N\Z}\subset \Deq$. \\
(ii) For each $M\in \O_{\M}$, we choose a lift $(M,\psi_M)\in \M^{\Z/N\Z}\subset \Deq$.
\bdefn
The crossed S-matrix $S(\M_a,F)$ (relative to the above choices) is the $\O_{\M_a}^F\times \O_\M$ submatrix of $\SD$ whose entries are given by
\beq
S(\M_a,F)_{L,M}=\SD_{(L,\psi_L),(M,\psi_M)}.
\eeq
\edefn

By \cite[3.1.21]{BK} and \cite[\S10]{ENO1} we see that
\blem\label{l:smaf}
The entries of the matrix $S(\M_a,F)$ are cyclotomic integers. In fact the numbers $\frac{S(\M_a,F)_{L,M}}{\dim_{\M_a}L}$ and $\frac{S(\M_a,F)_{L,M}}{\dim_{\M}M}$ are cyclotomic integers.
\elem

\brk\label{r:choices}
Let $\zeta$ be any $N$-th root of unity. If we rescale some $\psi_L$ for $L\in \O_{\M_a}^F$ by $\zeta$, then by Lemma \ref{l:smblocks} the $L$-th row of $S(\M_a,F)$ gets scaled by $\zeta$. On the other hand, if we rescale some $\psi_M$ for $M\in \O_\M$ by $\zeta$, then the $M$-th column of $S(\M_a,F)$ gets rescaled by $\zeta^a$. Note that for $a=0$, we recover the crossed S-matrix $S(\C,F)$ and in this case the different choices of the $\psi_M$ do not affect the matrix $S(\C,F)$.
\erk

\subsection{Unitarity of the matrices $S(\M_a,F)$}\label{s:umsmf}
Let us suppose that for each $D\in \O_\D^F=\coprod\limits_{a\in \ZbN}\O_{\M_a}^F$ we have chosen a lift $(D,\psi_D)\in \D^{\ZN}\subset \Deq$. With this choice, for each $a\in \ZbN$, we have the matrix $S(\M_a,F)$. By Lemma \ref{l:smaf}, it makes sense to take complex conjugates of entries of $S(\M_a,F)$. We will now prove the following generalization of Theorem \ref{t:main}(iii), (iv):
\bthm\label{t:genmain}
We have
\beq
S(\M_a,F)\cdot \overline{S(\M_a,F)}^T = \dim \C\cdot I = \overline{S(\M_a,F)}^T\cdot S(\M_a,F).
\eeq\label{e:smconj}
For $L\in \O_{\M_a}^F, M\in \O_\M$, we have 
\beq
(\psi_{L^*}\circ\psi_L^*)\cdot \overline{S(\M_a,F)}_{L,M}=S(\M_{-a},F)_{L^*,M}.
\eeq
\ethm
\bpf
Since $\SD$ is the S-matrix of a modular category we know (cf. \cite{BK}, \cite{ENO1},\cite{DGNO}) that 
\beq\label{e:sdunitary}
\SD\overline{\SD}^T=\overline{\SD}^T\SD=\dim \Deq\cdot I=4N^2\dim\C\cdot I.
\eeq
Note that for each $L\in \O_{\M_a}^F, M\in \O_\M$ we have chosen isomorphisms $\psi_L, \psi_M$. In fact let us choose $\Z/N\Z$-equivariance isomorphisms $\psi_D$ for each $D\in \O_\D^F$. Let us look at the columns in $\SD$ corresponding to the simple objects $(M,\psi'_M)\in \O_{\M^{\Z/2N\Z}}$. By Lemma \ref{l:smblocks} the submatrix formed by these columns is of the following block form
\beq
S:=\left(\begin{array}{cccc}
A_0&A_0&\cdots&A_0\\
A_1&\omega'A_1 & &\omega'^{2N-1}A_1\\
 \vdots&&&\\
A_{2N-1}&\omega'^{2N-1}A_{2N-1}&\cdots&\omega'^{(2N-1)^2}A_{2N-1}
\end{array}\right),
\eeq
where the first column of blocks corresponds to those simple objects in $\M^{\Z/2N\Z}$ that are defined by our chosen $\Z/N\Z$-equivariance structure for $M\in \O_\M$ and the subsequent columns of blocks correspond to the $\Z/2N\Z$-equivariance structures obtained by multiplying our chosen ones by various $2N$-th roots of unity. The rows in this block matrix correspond to the simple objects in the subcategories $\M_a^{\Z/2N\Z}\subset \Deq$ for $a\in \Z/2N\Z$. Then by (\ref{e:sdunitary}) we have $\overline{S}^TS=4N^2\dim\C\cdot I$ which gives us that 
\beq
\sum\limits_{a\in \Z/2N\Z}\overline{A_a}^TA_a=4N^2\dim\C\cdot I \mbox{ and }
\eeq
\beq
\sum\limits_{a\in \Z/2N\Z}\omega'^{ab}\overline{A_a}^TA_a=0 \mbox{ for $b\neq 0$ in $\Z/2N\Z$.}
\eeq
Using this we deduce that 
\beq\label{e:Aa}
\overline{A_a}^TA_a=2N\dim\C\cdot I \mbox{ for each $a\in \Z/2N\Z$.}
\eeq
Now let us zoom in and see what each $A_a$ looks like. Again by Lemma \ref{l:smblocks} and Corollary \ref{c:0insd}, we obtain the following block form
\beq
A_a=\left(\begin{array}{c}
S(\M_a,F)\\
\omega'S(\M_a,F)\\
\vdots\\
\omega'^{2N-1}S(\M_a,F)\\
0
\end{array}\right),
\eeq
where the first block corresponds to the simple objects in $\M_a^{\ZbN}$ of the form $(L,\psi_L)$ with $L\in \O_{\M_a}^F$ and $\psi_L$ the chosen $\ZbN$-equivariance structure. For $a\in \ZbN$, $a$-th block corresponds to the simple objects in $\M_a^{\ZbN}$ of the form $(L,\omega'^a\psi_L)$ with $L\in \O_{\M_a}^F$. Finally the last block corresponds to the remaining simple objects in $\M_a^{\ZbN}$. This last block is equal to zero by Corollary \ref{c:0insd}. Combining this with (\ref{e:Aa}), we conclude that $ \overline{S(\M_a,F)}^T\cdot S(\M_a,F)=\dim \C\cdot I$.

Now let us look at those columns in $\SD$ that correspond to simple objects of the form ${(L,\psi'_L)}\in \M_a^{\ZbN}\subset \Deq$ where $L\in \O_{\M_a}^F$. Now we repeat the same argument as before. As before, the submatrix $S'$ formed by these columns is of the form  
\beq
S'=\left(\begin{array}{cccc}
A'_0&A'_0&\cdots&A'_0\\
A'_1&\omega'A'_1 & &\omega'^{2N-1}A'_1\\
 \vdots&&&\\
A'_{2N-1}&\omega'^{2N-1}A'_{2N-1}&\cdots&\omega'^{(2N-1)^2}A'_{2N-1}
\end{array}\right)
\eeq
and from this we once again deduce that 
\beq\label{e:A'a}
\overline{A'_a}^TA'_a=2N\dim\C\cdot I \mbox{ for each $a\in \Z/2N\Z$.}
\eeq
Now let us zoom into the matrix $A'_1$. By the symmetry of the matrix $\SD$ and Lemma \ref{l:smblocks}, we see that it is of the form
\beq
A'_1=\left(\begin{array}{c}
S(\M_a,F)^T\\
\omega'^aS(\M_a,F)^T\\
\vdots\\
\omega'^{a(2N-1)}S(\M_a,F)^T
\end{array}\right)
\eeq
and from this we conclude that $S(\M_a,F)\cdot \overline{S(\M_a,F)}^T = \dim \C\cdot I$ as desired.

To prove (\ref{e:smconj}) we have 
$$(\psi_{L^*}\circ\psi_L^*)\cdot \overline{S(\M_a,F)}_{L,M}=(\psi_{L^*}\circ\psi_L^*)\cdot\overline{S(\Deq,F)}_{(L,\psi_L),(M,\psi_M)}$$
$$=(\psi_{L^*}\circ\psi_L^*)\cdot{S(\Deq,F)}_{(L^*,(\psi^*_L)^{-1}),(M,\psi_M)}$$
$$={S(\Deq,F)}_{(L^*,\psi_L^*),(M,\psi_M)}$$
$$={S(\M_{-a},F)}_{L^*,M}$$
as desired.
\epf
As a consequence, we obtain
\bcor\label{c:eqcar}
The sets $\O_\M$ and $\O_{\M_a}^F$ have equal cardinality.
\ecor

\brk 
Let $G$ be any connected algebraic group over $\Fqcl$ with an $\Fq$-Frobenius $F$. The above statement is an analogue of the fact that the number of $F$-stable irreducible representations of $G(\F_{q^m})$ is equal to the number of irreducible representations of $G(\Fq)$.
\erk

\subsection{Completion of the proofs}\label{s:cp}
We now complete the proofs of our main results stated in \S\ref{s:ucsm}. Theorem \ref{t:main}(i) follows immediately from the definition of equivariantization and that of the S-matrix (see also proof of Lemma \ref{l:smblocks}). We have already proved more general versions (Lemma \ref{l:smaf} and Theorem \ref{t:genmain}) of the remainder of Theorem \ref{t:main}.

Let us now prove Theorem \ref{t:conseq}. By Corollary \ref{c:kabdchars} the characters of the algebra $\KabD$ are precisely those of the form $\chi_{M,\psi'_M}$ where $(M,\psi'_M)\in \O_{\M^{\ZbN}}$. By Theorem \ref{t:main}(i) the restriction of $\chi_{M,\psi'_M}$ to the subalgebra ${\KabC}\subset \KabD$ agrees with the map $\chi_M$ from Theorem \ref{t:conseq}. This proves that $\chi_M$ is a character. The invertibility of the matrix $S(\C,F)$ implies that the characters $\chi_M$ for distinct $M\in \O_\M$ are distinct. Moreover, since $|\O_\M|=|\O_\C^F|=\dim_{\Qab}(\KabC)$, these must be all the characters. This completes the proof of Theorem \ref{t:conseq}(i).

Theorem \ref{t:conseq}(ii) readily follows from the definition of $\chi_M$ and the equality 
\beq
\chi_M\left(\sum\limits_{D\in \O_\C^F}a^D_{C,C'}[(D,\psi_D)]\right)=\chi_M([(C,\psi_C)]\cdot[(C',\psi_{C'})])=\chi_M([(C,\psi_C)])\cdot\chi_M([(C',\psi_{C'})]).
\eeq
From Theorem \ref{t:conseq}(ii) and Theorem \ref{t:main}(iii), we deduce that $(A_C)^T=S(\C,F)\cdot\Delta_C\cdot\frac{\overline{S(\C,F)}^T}{\dim\C}$ which is equivalent to Theorem \ref{t:conseq}(iii). This completes the proofs of all the results stated in \S\ref{s:ucsm}.

\section{Shintani descent for modular categories}\label{s:sdmc}
We have seen that we have the equality $|\O_\C^F|=|\O_\M|=\dim_kK(\C,F)$. In fact  we have also proved (see Corollary \ref{c:eqcar}) that $|\O_{\M_a}^F|=\dim_kK(\C,F)$ for each $a\in \Z/2N\Z$. In this section we will define the notion of Shintani descent in the setting of modular categories. This is a categorical analogue of the classical notion of Shintani descent that appears in the character theory of algebraic groups defined over finite fields (cf. \cite{S}, \cite{De2}).  Let $m$ be a positive integer. We will define the $m$-th Shintani descent map (it is only well defined up to scaling by $m$-th roots of unity)
\beq
\Sh_m:\O_{\M_m}^F\hookrightarrow K_{\Qab}(\C,F)=K_{\Z[\omega]}(\C,F)\otimes_{\Z[\omega]}\Qab
\eeq
such that the image $\Sh_m(\O_{\M_m}^F)$ is an orthonormal (see \S\ref{s:fak}) basis in $K_{\Qab}(\C,F)$.

\subsection{Twists in spherical braided crossed categories}\label{s:tsbcc}
To define Shintani descent let us first introduce the notion of twists in braided crossed categories. Recall that the notion of a spherical structure on a braided fusion category is equivalent to that of a ribbon twist structure (cf. \cite{DGNO}). Let us generalize this notion to the setting of braided crossed categories. Let $\Gamma$ be a finite group and let $\D$ be a braided $\Gamma$-crossed  fusion category
\beq
\D=\bigoplus\limits_{\g\in \Gamma}\M_\g.
\eeq 
Recall that we also have a monoidal action of $\Gamma$ on $\D$ and furthermore, for each $\g\in \Gamma$ we have $\g(\M_\g)\subset \M_\g$ and that the functor $\g|_{\M_\g}$ is isomorphic to $\id_{\M_\g}$ as a functor between abelian categories.  A twist in $\D$ is a collection of twists $\theta^\g$ for each $\g\in \Gamma$, where each $\theta^\g:\id_{\M_\g}\rar{\cong}\g|_{\M_\g}$ is a natural isomorphism satisfying the following compatibility relations: If $\ga,\gb\in \Gamma$, $M_1\in \M_{\ga}$ and $M_2\in \M_{\gb}$ then\\ 
(i) we should have the equality $\ga(\theta^{\gb}_{M_2})=\theta^{\ga\gb\ga^{-1}}_{\ga(M_2)}$ and\\
(ii) the twist $\theta^{\ga\gb}_{M_1\otimes M_2}$ should equal the composition
{\scriptsize
\beq
M_1\otimes M_2\xto{\beta_{M_1,M_2}}\ga(M_2)\otimes M_1\xto{\beta_{\ga(M_2),M_1}}\ga\gb\ga^{-1}(M_1)\otimes \ga(M_2)\xto{{\ga\gb\left(\theta^\ga_{\ga^{-1}(M_1)}\right)\otimes \ga\left(\theta^\gb_{M_2}\right)}}\ga\gb(M_1)\otimes \ga\gb(M_2).
\eeq
}

Then we have a 1-1 correspondence between twists on $\D$ in this sense and pivotal structures on $\D$ compatible with the action of $\Gamma$. Namely given a twist on $\D$, $\g\in \Gamma$ and $M\in \M_\g$ we have the pivotal structure on $\D$ defined using the isomorphisms
\beq\label{e:twtopiv}
M\rar{}M^*\otimes M^{**}\otimes M\rar{}M^*\otimes \g(M)\otimes M^{**}\xto{\id_M^*\otimes(\theta^\g_{M})^{-1}\otimes \id_{M^{**}}}M^*\otimes M\otimes M^{**}\rar{}M^{**}
\eeq
where the morphisms are defined using the coevaluations, crossed braidings and evaluations. On the other hand, given a pivotal structure we can define the twist $\theta^\g_M$ as the composition
\beq\label{e:pivtotw}
M\rar{}M\otimes M\otimes M^*\rar{}\g(M)\otimes M\otimes M^*\rar{}\g(M)\otimes M^{**}\otimes M^*\rar{}\g(M),
\eeq
where the morphisms are defined using the coevaluations, crossed braidings, pivotal structure and evaluations.
\brk
It is convenient to visualize these maps using ``string diagrams''. Using this visual aid, it is straightforward to check that the compositions (\ref{e:twtopiv}) and (\ref{e:pivtotw}) are isomorphisms (by drawing the string diagrams corresponding to the inverse morphisms) and that this indeed establishes an equivalence between the notions of twists in $\D$ and pivotal structures on $\D$ compatible with the $\Gamma$-action.
\erk

Moreover, the pivotal structure corresponding to a twist $\theta$ in $\D$ is spherical if in addition to (i) and (ii)\\
(iii) we have the equality $(\theta^\g_M)^*=\theta^{(\g^{-1})}_{\g(M^*)}$.

\brk\label{r:twind}
We will use this construction for the spherical braided $\Z/2N\Z$-crossed category $\D$ introduced in \S\ref{s:csbcc}. Hence we can construct a twist in the braided crossed category $\D$ agreeing with the ribbon twist in the modular category $\C$ corresponding the spherical structure on $\D$. In other words, for each $a\in \Z/2N\Z$, $L\in \M_a$, we have natural isomorphisms $\theta^a_L:L\rar{\cong}F^a(L)$.
\erk

\subsection{Shintani matrices}\label{s:sm}
Let $m$ be a positive integer. By a slight abuse of notation, we will also use $m$ to denote its reduction modulo $2N$. Recall that we have defined the $\O_{\M_m}^F\times \O_\M$ matrix $S(\M_m,F)$ in \S\ref{s:msmf}. To do this, for each $L\in \O_{\M_m}^F, M\in \O_\M$ we chose lifts $(L,\psi_L)\in \M_m^{\ZN}, (M,\psi_M)\in \M^{\ZN}$. In this section we will define another $\O_{\M_m}^F\times \O_\M$ matrix $\Sh_m(\M_m,F)$ known as the $m$-th Shintani matrix which is closely related to $S(\M_m,F)$.

For each $L\in {\M_m}, M\in \M$ we have the twists
\beq
\theta^m_L:L\rar{\cong}F^m(L),
\eeq
\beq
\theta^1_M:M\rar{\cong}F(M).
\eeq
Now for each $L\in \O_{\M_m}^F$ we choose an isomorphism $\eta_L:F(L)\rar{\cong} L$ such that the composition
\beq\label{e:etacondi}
L\xto{\theta^m_L}F^m(L)\xto{F^{m-1}(\eta_L)}F^{m-1}(L)\rar{}\cdots\rar{}F(L)\xto{\eta_L}L
\eeq
is equal to $\id_L$. There are $m$ choices for $\eta_L$ which differ from each other by $m$-th roots of unity. Also for each $M\in \O_\M$, let $\eta^m_M$ denote the composition
\beq
\eta_M^m:F^m(M)\xto{F^{m-1}((\theta^{1}_M)^{-1})}F^{m-1}(M)\rar{}\cdots\rar{}F(M)\xto{(\theta^{1}_M)^{-1}}M.
\eeq 
 Then let us define the automorphism $\g_{L,\eta_L,M,\eta^m_M}$ in $\D$ as the composition
\beq\label{e:geta}
\g_{L,\eta_L,M,\eta^m_M}:L\otimes M\xto{\beta_{L,M}}F^m(M)\otimes L\xto{\beta_{F^m(M),L}}F(L)\otimes F^m(M)\xto{\eta_L\otimes \eta^m_M}L\otimes M.
\eeq
\bdefn
The $m$-th Shintani matrix $\Sh_m(\M_m,F)$ is the $\O_{\M_m}^F\times \O_\M$ matrix defined by
\beq
\Sh_m(\M_m,F)_{L,M}=\tr_{\M_{m+1}}(\g_{L,\eta_L,M,\eta^m_M}) \mbox{ for $L\in \O_{\M_a}^F, M\in \O_\M$.}
\eeq
\edefn

\brk
The different choices of $\eta_L$ amount to rescaling the rows of $\Sh_m(\M_m,F)$ by $m$-th roots of unity.
\erk

\brk
The definition of the Shintani matrices is motivated by \cite[Thm. 5.4, Cor. 5.5]{De2}.
\erk

\subsection{Relationship with the matrices $S(\M_m,F)$}\label{s:rwmsmf}
In this section, we will see that the two $\O_{\M_m}^F\times \O_\M$ matrices $\Sh_m(\M_m,F)$ and $S(\M_m,F)$ are closely related.

As in the definition of $S(\M_m,F)$, we assume that we have chosen lifts $(D,\psi_D)\in \D^{\ZN}$ for each $D\in\O_\D^F$. Then for $L\in \O_{\M_m}^F, M\in \O_\M$, $S(\M_m,F)_{L,M}$ is defined as the trace of (see also proof of Lemma \ref{l:smblocks})
\beq\label{e:gpsi}
\g_{L,\psi_L,M,\psi^{(m)}_M}:L\otimes M\xto{\beta_{L,M}}F^m(M)\otimes L\xto{\beta_{F^m(M),L}}F(L)\otimes F^m(M)\xto{\psi_L\otimes \psi^{(m)}_M}L\otimes M.
\eeq

By the definition of the equivariantization and the definition of twists in modular categories (for example in terms of string diagrams) we deduce
\blem
Let $(L,\psi'_L)$ be any object of $\M_m^{\ZbN}\subset \Deq$. Then the twist $\theta_{L,\psi'_L}$ in the modular category $\Deq$ is equal to the composition
\beq
\theta_{L,\psi'_L}:L\xto{\theta^m_L}F^m(L)\xto{{\psi'_L}^{(m)}}L.
\eeq
\elem\label{l:twindeq}
Comparing (\ref{e:geta}) and (\ref{e:gpsi}) we deduce that for $L\in \O_{\M_a}^F, M\in \O_\M$ we have
\beq
\Sh_m(\M_m,F)_{L,M}=(\eta_L^{-1}\circ\psi_L)\cdot({(\eta_M^m)}^{-1}\circ \psi^{(m)}_M)\cdot S(\M_m,F).
\eeq
Furthermore, by the definition of $\eta^m_M$ and Lemma \ref{l:twindeq} we deduce that
\beq
\Sh_m(\M_m,F)_{L,M}=(\theta'_{L,\psi_L,\eta_L})\cdot(\theta_{M,\psi_M})^m\cdot S(\M_m,F),
\eeq
where $\theta_{M,\psi_M}$ is the twist in $\D$ associated with the simple object $(M,\psi_M)$ and $\theta'_{L,\psi_L,\eta_L}:=\eta_L^{-1}\circ\psi_L$. Moreover by Lemma \ref{l:twindeq} and (\ref{e:etacondi}) we have $(\theta'_{L,\psi_L,\eta_L})^m=\theta_{L,\psi_L}$. Now twists in modular categories are always roots of unity. Hence $(\theta'_{L,\psi_L,\eta_L})$ must also be a root of unity. 

Hence we have proved:
\bprop\label{p:shandsmf}
Let $T(\M,F)$ be the $\O_\M\times \O_\M$ diagonal matrix such that $T(\M,F)_{M,M}=\theta_{M,\psi_M}$ (the twist in $\Deq$) for $M\in \O_\M$ and our chosen $\psi_M$. Let $T'(\M_m,F)$ be the $\O_{\M_m}^F\times \O_{\M_m}^F$ diagonal matrix such that $T'(\M_m,F)_{L,L}=\theta'_{L,\psi_L,\eta_L}$ for $L\in \O_{\M_m}^F$. Then all the entries of the diagonal matrices $T(\M,F)$ and $T'(\M_m,F)$ are roots of unity and we have
\beq
\Sh_m(\M_m,F)=T'(\M_m,F)\cdot S(\M_m,F)\cdot T(\M,F)^m.
\eeq
Hence the entries of the matrix $\Sh_m(\M_m,F)$ are cyclotomic integers and we have
\beq
\Sh_m(\M_m,F)\cdot\overline{\Sh_m(\M_m,F)}^T=\dim\C\cdot I=\overline{\Sh_m(\M_m,F)}^T\cdot \Sh_m(\M_m,F)
\eeq
\eprop
\brk
The rows of the matrix $\Sh_m(\M_m,F)$ are well defined up to scaling by $m$-th roots of unity. This is not true for the matrix $S(\M_m,F)$, where different choices of the isomorphisms $\psi_M$ may lead to rescaling of columns. 
\erk

\subsection{Shintani descent}\label{s:sd}
Let $m$ be any positive integer. We will now define the $m$-th Shintani descent map as a composition
\beq
\Sh_m:\O_{\M_m}^F\hookrightarrow \Fun_{\Qab}(\O_\M)\xto{\chi^{-1}}\KabC,
\eeq
where $\chi^{-1}$ is the inverse of the isomorphism $\chi$ defined in Theorem \ref{t:conseq}(i). For $L\in \O_{\M_m}^F$ we define the function $\chi_m(L)\in\Fun_{\Qab}(\O_\M)$ by setting 
\beq
\chi_m(L)(M):=\frac{\Sh_m(\M_m,F)_{L,M}}{\dim_\M(M)} \mbox{ for $M\in \O_\M$}
\eeq
and then we define $\Sh_m(L):=\chi^{-1}(\chi_m(L))\in \KabC$.
\brk
A different choice of the isomorphism $\eta_L$ amounts to rescaling $\Sh_m(L)$ by an $m$-th root of unity.
\erk
As a corollary of our results, we obtain
\bthm\label{t:sdmain}
(i) For each positive integer $m$, the image $\Sh_m(\O_{\M_m}^F)\subset \KabC$ is an orthonormal basis. We call this basis as the $m$-th Shintani basis of $\KabC$.\\
(ii) There exists a positive integer $m_0\in N\Z$ such that $T(\M,F)^{m_0}=I$. Then the $m$-th Shintani basis (up to rescaling by roots of unity) only depends on the residue class of $m$ modulo $m_0$. \\
(iii) If $m\in m_0\Z$, then the $m$-th Shintani basis of $\KabC$ coincides with the basis $\{[(C,\psi_C)]|C\in \O_\C^F\}$.
\ethm
\bpf
Recall from Corollary \ref{c:minid} that the set of idempotents $\{e_M|M\in \O_\M\}$ form a basis of $\KabC$. By definition, we have
\beq
\Sh_m(\M_m,F)\cdot\left(\begin{array}{c}\vdots\\\frac{e_M}{\dim_\M M}\\\vdots\end{array}\right)=\left(\begin{array}{c}\vdots\\{\Sh_m(L)}\\\vdots\end{array}\right).
\eeq
Then by Proposition \ref{p:shandsmf} and Corollary \ref{c:minid} we deduce statement (i). 

The existence of $m_0$ from (ii) follows from the fact that the matrix $T(\M,F)$ is diagonal with diagonal entries being roots of unity. Now the matrix $S(\M_m,F)$ only depends (up to $\pm1$) on the residue class of $m$ modulo $N$. By the condition on $m_0$, the matrix $T(\M,F)^m$ depends only on the residue class of $m$ modulo $m_0$. Hence by Proposition \ref{p:shandsmf}, the matrix $\Sh_m(\M_m,F)$ only depends (up to rescaling rows by roots of unity) on the residue of $m$ modulo $m_0$ which proves statement (ii). 

Now suppose that $m\in m_0\Z\subset N\Z$. Then $S(\M_m,F)=\pm S(\C,F)$ and $T(\C,F)^m=I$. Statement (iii) now follows from Proposition \ref{p:shandsmf} and Corollary \ref{c:minid}. 
\epf

\brk
Theorem \ref{t:sdmain} is a categorical analogue of \cite[Thm. 1.7(i),(iii), Def. 3.2]{De2}.
\erk

\subsection{The twisting operator and Shintani descent}\label{s:tosd}
In this section, we study the (unitary) twisting operator $\Theta:\KabC\rar{\cong}\KabC$ and relate it to Shintani descent for $m=1$.

We define the operator $\Theta:\KabC\rar{\cong}\KabC$ by setting $\Theta([(C,\psi)])=\theta_C\cdot[(C,\psi)]$ for $C\in \O_\C^F$, where $\theta_C$ is the twist in the modular category $\C$ and it equals the twist $\theta_{C,\psi_C}$ in the modular category $\Deq$. On the other hand, we have the 1st Shintani descent map $\Sh_1:\O_\M\hookrightarrow\KabC$ whose image is an orthonormal basis of $\KabC$. Note that the map $\Sh_1$ is uniquely determined, i.e. it does not depend on any choices since for each $M\in \O_\M$, $\eta_M$ must equal $(\theta^1_M)^{-1}$ (see (\ref{e:etacondi})). 

We have another basis of $\KabC$ parametrized by the set $\O_\M$, namely the basis $\{\frac{e_M}{\dim_\M M}|M\in \O_\M\}$. We now compare this basis with the 1st Shintani basis of $\KabC$.
\bthm\label{t:asai}
For $M\in \O_\M$, we have 
\beq
\Sh_1(M)=\tau^+(\C)\cdot\Theta^{-1}\left(\frac{e_M}{\dim_\M M}\right)
\eeq
where $\tau^+(\C):=\sum\limits_{C\in \O_\C}\theta_C\cdot (\dim C)^2$ is known as the Gauss sum of the modular category $\C$.
\ethm
\bpf
Let $T(\C,F)$ be the $\O_\C^F\times \O_\C^F$ diagonal matrix whose diagonal entries are $T(\C,F)_{C,C}=\theta_C$ for $C\in \O_\C^F$. Then by definition, we have
\beq
\Theta\left(\begin{array}{c}\vdots\\{[(C,\psi_C)]}\\\vdots\end{array}\right)=T(\C,F)\cdot\left(\begin{array}{c}\vdots\\{[(C,\psi_C)]}\\\vdots\end{array}\right).
\eeq
Now by Corollary \ref{c:minid}, we get
\beq
\Theta\left(\begin{array}{c}\vdots\\{\frac{e_M}{\dim_\M M}}\\\vdots\end{array}\right)=S(\C,F)^{-1}\cdot T(\C,F)\cdot S(\C,F)\cdot\left(\begin{array}{c}\vdots\\{\frac{e_M}{\dim_\M M}}\\\vdots\end{array}\right) \mbox{ and hence}
\eeq
\beq
\Theta^{-1}\left(\begin{array}{c}\vdots\\{\frac{e_M}{\dim_\M M}}\\\vdots\end{array}\right)=\frac{\overline{S(\C,F)}^{T}}{\dim\C}\cdot T(\C,F)^{-1}\cdot S(\C,F)\cdot\left(\begin{array}{c}\vdots\\{\frac{e_M}{\dim_\M M}}\\\vdots\end{array}\right).
\eeq
Hence to complete the proof, we must prove that
\beq
\Sh_1(\M,F)=\frac{\tau^+(\C)}{\dim\C}\cdot{\overline{S(\C,F)}^{T}}\cdot T(\C,F)^{-1}\cdot S(\C,F) \mbox{ or equivalently (see Prop. \ref{p:shandsmf}) that}
\eeq
\beq\label{e:tpt}
T(\M,F)\cdot S(\M,F)\cdot T(\M,F)=\frac{\tau^+(\C)}{\dim\C}\cdot{\overline{S(\C,F)}^{T}}\cdot T(\C,F)^{-1}\cdot S(\C,F). 
\eeq
To prove this last equality, we will use \cite[Thm. 3.1.7 (3.1.10)]{BK} for the modular category $\Deq$ which gives us $\left(S(\Deq)T(\Deq)\right)^3=\tau^+(\Deq)S(\Deq)^2$ which is equivalent to
\beq\label{e:sdrel}
T(\Deq)\cdot S(\Deq)\cdot T(\Deq)=\frac{\tau^+(\Deq)}{\dim\Deq}\cdot{\overline{S(\Deq)}^{T}}\cdot T(\Deq)^{-1}\cdot S(\Deq).
\eeq
Note that the matrix $T(\Deq)$ above is the $T$-matrix of the modular category $\Deq$, a diagonal matrix whose diagonal entries are the twists of the simple objects of $\Deq$.

We now follow the strategy of the proof of Theorem \ref{t:genmain} and look at the block decomposition of the matrices $\SD$ and $T(\Deq)$. For ease of notation, for each $a\in \ZbN$ we set $S_a:=S(\M_a,F)$ and $T_a=T(\M_a,F)$, i.e. $T_a$ is the $\O_{\M_a}^F\times \O_{\M_a}^F$ diagonal matrix such that $({T_a})_{L,L}=\theta_{L,\psi_L}$ for $L\in \O_{\M_a}^F$.

Consider the columns in $\SD$ corresponding to the simple objects $(M,\psi_M)\in \Deq$ for $M\in \O_\M$. We have seen the block decomposition of the submatrix  formed by these columns in the proof of Theorem \ref{t:genmain}. Now let us look at the same columns in the product $T(\Deq)^{-1}\cdot S(\Deq)$. The submatrix formed by these columns has the following block form (here $\omega'$ is our chosen primitive $2N$-th root on unity)

\beq
\left(\begin{array}{c}
T_0^{-1}\cdot S_0\\
T_0^{-1}\cdot\omega'S_0\\
\vdots\\
T_0^{-1}\cdot\omega'^{2N-1}S_0\\
0\\
\hline\\
T_1^{-1}\cdot S_1\\
\omega'^{-1}T_1^{-1}\cdot\omega'S_1\\
\vdots\\
\omega'^{1-2N}T_1^{-1}\cdot\omega'^{2N-1}S_1\\
\hline\\
\vdots\\\\
\hline\\
T_a^{-1}\cdot S_a\\
\omega'^{-a}T_a^{-1}\cdot\omega'S_a\\
\vdots\\
\omega'^{-a(2N-1)}T_a^{-1}\cdot\omega'^{2N-1}S_a\\
0\\
\hline
\vdots\\
\end{array}\right)=\left(\begin{array}{c}
T_0^{-1} S_0\\
\omega'T_0^{-1} S_0\\
\vdots\\
\omega'^{2N-1}T_0^{-1}S_0\\
0\\
\hline\\
T_1^{-1}S_1\\
T_1^{-1}S_1\\
\vdots\\
T_1^{-1}S_1\\
\hline\\
\vdots\\\\
\hline\\
T_a^{-1} S_a\\
\omega'^{1-a}T_a^{-1}S_a\\
\vdots\\
\omega'^{(1-a)(2N-1)}T_a^{-1}S_a\\
0\\
\hline
\vdots\\
\end{array}\right).
\eeq
Then we look at (\ref{e:sdrel}) and consider the $\O_\M\times \O_\M$ subblock of the equation corresponding to pairs of the simple objects $(M,\psi_M)$ for $M\in \O_\M$. The subblock on the left hand side of (\ref{e:sdrel}) is equal to $T_1S_1T_1$. The corresponding subblock on the right hand side of (\ref{e:sdrel}) is equal to 
\beq
\frac{\tau^+(\Deq)}{4N^2\dim\C}\cdot\sum\limits_{a\in\ZbN}{\overline{S_a}^TT_a^{-1}S_a+(\omega'^{-a})\overline{S_a}^TT_a^{-1}S_a+\cdots+(\omega'^{-a})^{2N-1}\overline{S_a}^TT_a^{-1}S_a}
\eeq
\beq
=\frac{\tau^+(\Deq)}{4N^2\dim\C}\cdot\sum\limits_{a\in\ZbN}{\left(1+\omega'^{-a}+\cdots+(\omega'^{-a})^{2N-1}\right)\overline{S_a}^TT_a^{-1}S_a}
\eeq
\beq
=\frac{\tau^+(\Deq)}{2N\dim\C}\overline{S_0}^TT_0^{-1}S_0.
\eeq
Now by \cite[Thm. 6.16]{DGNO}, $\tau^+(\Deq)=2N\cdot \tau^+(\C)$. Hence we conclude that
\beq
T_1S_1T_1=\frac{\tau^+(\C)}{\dim\C}\overline{S_0}^TT_0^{-1}S_0.
\eeq
Hence we have proved the equality \ref{e:tpt} and the proof is now complete.
\epf

\brk
The operator $\Theta:\KabC\rar{\cong} \KabC$ is an analogue of the Asai twisting operator (cf. \cite{S}, \cite[Prop. 3.4]{De2}). Then Theorem \ref{t:asai} is a categorical analogue of \cite[\S3.2]{De2}.
\erk

\end{document}